\theoremstyle{plain}
\newtheorem{theorem}{Theorem}[section]
\newtheorem{definition}[theorem]{Definition}
\theoremstyle{remark}
\newtheorem{remark}[theorem]{Remark}
\newtheorem*{claim}{Claim}
\begin{document}
\title
{Critical Heegaard surfaces obtained by amalgamation}
\author[J. H. Lee]{Jung Hoon Lee}
\address{Department of Mathematics and Institute of Pure and Applied Mathematics,
Chonbuk National University, Jeonju 561-756, Korea}
\email{junghoon@jbnu.ac.kr}

\maketitle
\begin{abstract}
Critical surfaces are defined by Bachman as topological index $2$ surfaces,
generalizing incompressible surfaces and strongly irreducible surfaces.
In this paper we give a condition to obtain critical Heegaard surfaces by amalgamation.
As a special case, we obtain critical Heegaard surfaces by boundary stabilization.
It gives critical Heegaard surfaces of non-minimal genus,
for $3$-manifolds which do not admit distinct Heegaard splittings (up to isotopy).
\end{abstract}

\section{Introduction}

Let $S$ be a closed orientable separating surface in an irreducible $3$-manifold $M$,
dividing $M$ into two submanifolds $V$ and $W$.
Define the {\em disk complex} $\mathcal{D}_S$ as follows.
\begin{itemize}
\item Vertices of $\mathcal{D}_S$ are isotopy classes of compressing disks for $S$.
\item A collection of $k+1$ distinct vertices constitute a $k$-cell if
      there are mutually disjoint representatives.
\end{itemize}

By an abuse of terminology, we sometimes identify a vertex
with some representative compressing disk of the vertex.
Let $\mathcal{D}_S(V)$ and $\mathcal{D}_S(W)$ be the subcomplexes of $\mathcal{D}_S$
spanned by compressing disks in $V$ and $W$ respectively.
In this paper, we focus only on vertices and edges of
$\mathcal{D}_S(V)$, $\mathcal{D}_S(W)$, and $\mathcal{D}_S$.

If $S$ is an incompressible surface, then $\mathcal{D}_S$ is empty, and vice versa.
We say that $S$ is {\em strongly irreducible} if both $\mathcal{D}_S(V)$ and $\mathcal{D}_S(W)$ are non-empty,
and $\mathcal{D}_S(V)$ is not connected to $\mathcal{D}_S(W)$ in $\mathcal{D}_S$.
Strongly irreducible surfaces are proved to be useful
to analyze the Heegaard structure and topology of $3$-manifolds.
For example, if a minimal genus Heegaard surface is not strongly irreducible,
then the manifold contains an incompressible surface \cite{Casson-Gordon}.

An incompressible surface and a strongly irreducible surface can be regarded as
topological analogues of an index $0$ minimal surface and an index $1$ minimal surface respectively.
As a generalization of this idea, Bachman has defined a notion of {\em critical surface}
\cite{Bachman1}, \cite{Bachman2}, \cite{Bachman3},
which can be regarded as a topological index $2$ minimal surface.
It is equivalent to that $\pi_1(\mathcal{D}_S)$ is non-trivial.

\begin{definition}\label{def1}
A surface $S$ is critical if vertices of $\mathcal{D}_S$ can be partitioned
into two non-empty sets $C_0$ and $C_1$.

\begin{enumerate}
\item For each $i=0,1$, there is at least one pair of compressing disks
    $D_i\in \mathcal{D}_S(V)\cap C_i$ and $E_i\in \mathcal{D}_S(W)\cap C_i$
    such that $D_i\cap E_i=\emptyset$.
\item If $D\in \mathcal{D}_S(V)\cap C_i$ and $E\in \mathcal{D}_S(W)\cap C_{1-i}$,
    then $D\cap E\ne \emptyset$ for any representative disks.
    In other words, $D$ and $E$ are not joined by an edge.
\end{enumerate}
\end{definition}

Critical surfaces behave in some way similarly as
incompressible surfaces and strongly irreducible surfaces do.
For example, if an irreducible manifold contains an incompressible surface and a critical surface,
then the two surfaces can be isotoped so that any intersection loop is essential on both surfaces.
In fact, this is true for all topologically minimal surfaces \cite{Bachman3}.

Topological index theory, including critical surfaces, is a relatively new theory pioneered by Bachman.
One can ask how large the class of critical surfaces is.
In \cite{Bachman1}, it was shown that if a manifold which does not contain incompressible surfaces
has two distinct strongly irreducible Heegaard splittings,
then the minimal genus common stabilization of the two splittings is critical.
(In fact the original definition of criticality in \cite{Bachman1} is
slightly stronger than Definition \ref{def1}.
Hence the above mentioned result still holds with respect to Definition \ref{def1}.)

In this paper, we show that some critical Heegaard surfaces can be obtained by amalgamating
two strongly irreducible Heegaard splittings.

\begin{theorem}\label{thm1}
Let $X\cup_S Y$ be an amalgamation of two strongly irreducible Heegaard splittings
$V_1\cup_{S_1}W_1$ and $V_2\cup_{S_2}W_2$ along homeomorphic boundary components of
$\partial_-V_1$ and $\partial_-V_2$. See Figure \ref{fig2}.
Assume that $V_2$ is constructed from $\partial_-V_2\times I$ by attaching only one $1$-handle.
If there exist essential disks $D_1\subset W_1$ and $D_2\subset W_2$
which persist into disjoint essential disks in $Y$ and $X$ respectively, then $S$ is critical.
\end{theorem}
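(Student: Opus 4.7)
The plan is to exhibit a bipartition $C_0 \sqcup C_1$ of the vertex set of $\mathcal{D}_S$ that certifies $S$ is critical, with $C_0$ the ``$W$-type'' disks and $C_1$ the ``$V$-type'' disks.

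First I would unpack the amalgamation picture. Because $V_2 = (\partial_- V_2 \times I) \cup H$, the amalgamated Heegaard surface $S$ acquires exactly one distinguished tube from the $1$-handle $H$, and the cocore $D^*$ of $H$ is an essential compressing disk for $S$. In the standard amalgamation pattern (with $F = \partial_-V_1 = \partial_-V_2$ identified), $V_2$ lies on the same side of $S$ as $W_1$, which we call $Y$; in particular $D^* \in \mathcal{D}_S(Y)$, as does the persisted copy of $D_1$. On the other side $X$ lie $V_1$ and $W_2$, so the persisted copy of $D_2$ is in $\mathcal{D}_S(X)$; moreover, strong irreducibility of $S_1$ forces $V_1$ to contain at least one $1$-handle, and the cocore $E^*$ of any such handle provides an essential compressing disk in $\mathcal{D}_S(X)$ lying inside $M_1$.

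Now set $C_0 = \{\text{vertices of } \mathcal{D}_S \text{ representable by a disk in } W_1 \cup W_2\}$ and $C_1 = \{\text{vertices representable by a disk in } V_1 \cup V_2\}$. Condition (1) follows from the two distinguished pairs: $(D_1, D_2)$ witnesses it for $C_0$, disjoint by hypothesis, and $(E^*, D^*)$ witnesses it for $C_1$, disjoint because $E^* \subset M_1$ and $D^* \subset M_2$ lie in disjoint pieces of $M$. For condition (2), a disk in $\mathcal{D}_S(X) \cap C_0$ is represented in $W_2 \subset X$ and a disk in $\mathcal{D}_S(Y) \cap C_1$ in $V_2 \subset Y$, so strong irreducibility of $S_2$ forces them to intersect; the symmetric case with $\mathcal{D}_S(X) \cap C_1$ and $\mathcal{D}_S(Y) \cap C_0$ uses strong irreducibility of $S_1$.

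The hard part will be showing $C_0 \cup C_1 = \mathcal{D}_S$, i.e., that every compressing disk $E$ for $S$ can be isotoped into $V_1 \cup W_1 \cup V_2 \cup W_2$. The plan here is to isotope $E$ so as to minimize $|E \cap F|$; the hypothesis that $V_2$ is built from $F \times I$ by attaching exactly one $1$-handle is crucial, because it forces $D^*$ to be, up to isotopy, the only compressing disk for $F$ on the $V_2$-side, so any innermost subdisk of $E \cap F$ whose boundary is essential on $F$ can be surgered off by a parallel copy of $D^*$ (with careful bookkeeping of which side of $S$ the resulting pieces land on). Once $E$ is pushed entirely into one of the $M_i$, strong irreducibility of $S_i$ classifies it as either a $V_i$- or a $W_i$-side compressing disk for $S_i$, which is exactly the assignment needed to place $E$ in $C_0$ or $C_1$ and complete the proof that $S$ is critical.
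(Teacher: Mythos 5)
There is a genuine gap, and it is exactly where you predicted the hard part would be: your classes $C_0$ (disks isotopic into $W_1\cup W_2$) and $C_1$ (disks isotopic into $V_1\cup V_2$) do not exhaust $\mathcal{D}_S$, and no argument can make them do so, because such disks genuinely exist outside both classes. For instance, band the cocore of a $1$-handle of $V_1$ (which lies in $X$) to an essential disk of $W_2$ along an arc in $S$: the result is a compressing disk for $S$ in $X$ that runs over a handle of $V_1$ \emph{and} through $W_2$, hence is isotopic into none of the four original pieces. Your proposed repair--minimizing $|E\cap F|$ and surgering along ``the only compressing disk for $F$ on the $V_2$-side''--rests on a false premise: $F$ is incompressible in $M$ (the paper notes this follows from strong irreducibility of both splittings), so $F$ has no compressing disks at all; the cocore $D^*$ is a compressing disk for $S$, not for $F$, since $\partial D^*\subset S$. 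Moreover, even a disk pushed entirely into $M_i$ need not be a compressing disk for $S_i$ (its boundary, while essential on $S$, may bound a disk in $S_i$ containing feet of tubes), so the final ``classification by strong irreducibility of $S_i$'' does not go through either. A secondary issue: even granting the covering, you would still need to check that your two classes are disjoint (that no disk is isotopic both into some $W_j$ and into some $V_k$), which you do not address.

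The paper sidesteps all of this by choosing a partition that tautologically covers $\mathcal{D}_S$: with $E$ the cocore of the unique $1$-handle of $V_2$, it puts into $C_0$ exactly $\{E\}$ on the $Y$-side and the disks of $X$ disjoint from $E$ on the $X$-side, and everything else into $C_1$. Condition (1) is then witnessed by (cocore of a $V_1$-handle, $E$) for $C_0$ and by the persisted $(D_2,D_1)$ for $C_1$ (note $D_2$ meets $E$ by strong irreducibility of $S_2$, so it really lands in $C_1$). The genuine work is condition (2) in the one nontrivial direction: a disk $D_0\subset X$ disjoint from $E$ is first isotoped off the incompressible surface $F$ so that it can be regarded as an essential disk in $V_1$, and then an outermost-arc surgery on $E\cap E_1$ (for $E_1\subset Y$ not isotopic to $E$ and disjoint from $D_0$) produces an essential disk of $W_1$ disjoint from $D_0$, contradicting strong irreducibility of $S_1$. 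I would encourage you to redo the proof with a partition of this ``everything versus disks disjoint from one distinguished disk'' type; your observations about which persisted disks live on which side of $S$ remain correct and are reused in that argument.
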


As a special case of Theorem \ref{thm1}, we obtain critical Heegaard surfaces by
boundary stabilization---an amalgamation with a standard type $2$ Heegaard splitting of
$({\rm surface}) \times I$ \cite{Scharlemann-Thompson}.
This gives plenty of new critical Heegaard surfaces of non-minimal genus,
also for $3$-manifolds which do not admit distinct Heegaard splittings.
In other words, it says the existence of a critical Heegaard surface of non-minimal genus
for a $3$-manifold admitting a unique minimal genus Heegaard splitting.

\begin{theorem}\label{thm2}
If a strongly irreducible Heegaard surface of a $3$-manifold with boundary
admits a disjoint pair of a vertical annulus and an essential disk on opposite sides,
then the Heegaard splitting obtained from it by a boundary stabilization is critical.
\end{theorem}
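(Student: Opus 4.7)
My plan is to deduce Theorem \ref{thm2} directly from Theorem \ref{thm1}, by identifying boundary stabilization with amalgamation along $F$ against a standard type 2 Heegaard splitting $V_2\cup_{S_2}W_2$ of $F\times I$, where $F$ is the component of $\partial_-V_1$ containing $\gamma:=A\cap F$. In a standard type 2 splitting, both $V_2$ and $W_2$ are compression bodies built from a copy of $F\times I$ by attaching exactly one 1-handle, so the hypothesis on $V_2$ in Theorem \ref{thm1} is automatic. It then remains to exhibit essential disks $D_1\subset W_1$ and $D_2\subset W_2$ which persist into disjoint essential disks in $Y\supset W_1$ and $X\supset W_2$ of the amalgamation.

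Take $D_1:=D$. For $D_2$, I first isotope the type 2 splitting so that the stabilizing arc $\alpha$ of $V_2$ sits in $F\times[1/2,1]$ with both feet on a small push-off of $\gamma\times\{1/2\}$ and with $\alpha$ trivially parallel rel endpoints into $F\times\{1/2\}$; this is permitted because the isotopy class of the single stabilizing arc in $F\times I$ is essentially unique. Let $D_2$ be the cocore of the single 1-handle of $W_2$: its boundary is a meridian of $N(\alpha)$, hence lies on the portion of $S_2$ obtained from the lateral cylinder of $\alpha$, in a neighborhood of an ``upper copy'' of $\gamma$.

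For persistence and disjointness, observe that under the Schultens amalgamation the feet of the tubes on the $S_1$-side are located in small disks obtained by transporting the feet of $\alpha$ straight up through the $F$-collar of $V_1$, i.e.\ in disks clustered around $\gamma_2:=A\cap S_1$. Since $A\cap D=\emptyset$ forces $\partial D\cap\gamma_2=\emptyset$, the curve $\partial D$ can be isotoped off these feet, and $D$ restricts to an essential disk in $Y\supset W_1$. Likewise, arranging $V_1$'s handle feet away from $\gamma$ (using the freedom in $V_1$'s 1-handle structure) makes $D_2$ persist into $X\supset W_2$. After amalgamation $\partial D$ lies in the $S_1$-portion of $S$ while $\partial D_2$ lies in the $S_2$-portion, and these are disjoint subsurfaces of $S$; hence the persisting disks have disjoint boundaries on $S$ and are themselves disjoint on opposite sides. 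Theorem \ref{thm1} now yields that $S$ is critical.

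The main obstacle is this last persistence and disjointness check: one must control the positions of the amalgamation tube feet on both $S_1$ and $S_2$ and confirm that each chosen disk remains essential (rather than becoming $\partial$-parallel) after the surrounding compression body is enlarged. The assumption that $A$ is a \emph{vertical} annulus is essential here: verticality provides a product neighborhood through which $A$ extends cleanly into $F\times I$ and along which the stabilizing handle $\alpha$ of $V_2$ can be aligned, forcing the tube feet on $S_1$ into an arbitrarily small neighborhood of $\gamma_2$ that the hypothesis $A\cap D=\emptyset$ lets us avoid.
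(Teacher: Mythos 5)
Your overall strategy---realizing boundary stabilization as amalgamation with the standard type $2$ splitting of $F\times I$ and invoking Theorem \ref{thm1}---is legitimate and matches the paper's framing, although the paper itself proves Theorem \ref{thm2} directly (Section 3) by partitioning $\mathcal{D}_{S'}$ according to intersection with the cocore $E$ of $N(\gamma)$. The genuine gap is your choice of $D_2$, which is exactly where the hypothesis of Theorem \ref{thm1} has to be earned. A circle on $S_2$ that is a meridian of the stabilizing tube $N(\alpha)$ bounds the cocore of $N(\alpha)$, which is an essential disk of $V_2$, \emph{not} of $W_2$: under the identification $W_2\cong(\text{punctured }F)\times I$ that meridian becomes the boundary of the puncture, which is not null-homotopic for $g(F)\geq 1$, so it bounds no disk in $W_2$ (if it did, the type $2$ splitting would be reducible). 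Relatedly, $W_2$ is a genus $2g(F)$ handlebody with no distinguished ``single $1$-handle,'' and it is $V_2$, not $W_2$, that is two collars plus one $1$-handle. The disk you describe is in fact the paper's disk $E$; it lives on the $Y$-side and plays the role of the singleton class $\mathcal{D}_S(Y)\cap C_0$ in the proof of Theorem \ref{thm1}, not the role of $D_2$.

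The correct $D_2$ is manufactured from the vertical annulus, and this is the only place the hypothesis on $A$ really enters: taking the puncture of $W_2\cong(\text{punctured }F)\times I$ at the foot of $\gamma$, the disk $D_2:=\mathrm{cl}(A\cap W_2)$ is $(\text{essential arc})\times I$, an essential disk of $W_2$ which persists into $X$ (where it is isotopic to $\mathrm{cl}((A\cap V')-N(\gamma))$, the disk used in the paper's argument) and is disjoint from the persisting copy of $D$ precisely because $A\cap D=\emptyset$. Your alternative disjointness argument---that $\partial D$ and $\partial D_2$ lie in disjoint $S_1$- and $S_2$-subsurfaces of $S$---cannot be repaired: the amalgamated surface is $(S_1-\mathrm{disk})\cup(\text{tube})\cup(F'-\mathrm{disk})$ with $F'$ a parallel, incompressible copy of $F$, so every essential disk of $W_2$ has boundary running over the $S_1$-part of $S$; the two boundary curves must be separated by the hypothesis $A\cap D=\emptyset$, not by living in disjoint pieces of $S$. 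Finally, Theorem \ref{thm1} requires \emph{both} splittings to be strongly irreducible, and you never verify this for the type $2$ splitting of $F\times I$; it is true (Scharlemann--Thompson), but it is a hypothesis you are obligated to check.
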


\begin{remark}\label{rmk1}
Other examples of critical Heegaard surfaces, including high index topologically minimal surfaces,
are constructed in \cite{Bachman-Johnson}.
On the other hand, unstabilized examples of critical Heegaard surfaces are shown in \cite{Lee}.
\end{remark}

Our main theorem is Theorem \ref{thm1}, but we consider Theorem \ref{thm2} first
because it is easier.
In Section $2$, we define boundary stabilization of a Heegaard splitting.
In Section $3$, we give a proof of Theorem \ref{thm2}.
In Section $4$, we define amalgamation of two Heegaard slittings.
In Section $5$, we give a proof of Theorem \ref{thm1}.

\section{Boundary stabilization}

A {\em compression body} $V$ is a $3$-manifold obtained from a closed surface $S$
by attaching some $2$-handles to $S\times\{0\}\subset S\times I$ and
capping off any resulting $2$-sphere boundary components with $3$-balls.
The surface $S\times\{1\}$ is denoted by $\partial_+ V$ and
$\partial V-\partial_+ V$ is denoted by $\partial_- V$.
If $\partial_-V=\emptyset$, then $V$ is called a {\em handlebody}.
There is also a dual description for compression body.
When $\partial_-V\ne\emptyset$, $V$ can be obtained from $\partial_-V\times I$
by attaching dual $1$-handles to $\partial_-V\times\{1\}$.
For a $3$-manifold $M$, a {\em Heegaard splitting} $V\cup_S W$ is a decomposition of $M$
into two compression bodies $V$ and $W$, where
$\partial_+V=\partial_+W=S$ and $\partial M=\partial_-V \cup \partial_-W$.
It is known that every compact $3$-manifold admits Heegaard splittings.

For a Heegaard splitting $V\cup_S W$, let $\alpha$ be a properly embedded arc in $W$
which is parallel to an arc in $S$.
We add $N(\alpha)$ to $V$ and remove it from $W$.
Then we get a new Heegaard splitting $V'\cup_{S'} W'$ with the genus increased by one.
This is called a {\em stabilization} of $V\cup_S W$.
Conversely, for a stabilized Heegaard splitting $V'\cup_{S'} W'$
there exist essential disks $D\subset V'$ and $E\subset W'$ such that $|D\cap E|=1$, and
$V'\cup_{S'} W'$ can be {\em destabilized} to a lower genus Heegaard splitting.

Let $g(\cdot)$ denote the genus of a surface.
For Heegaard splittings under consideration in Section $2$ and $3$,
we assume that $\partial_-V\ne\emptyset$ and $g(\partial_-V)<g(\partial_+V)$ and $W$ is a handlebody.

A {\em complete meridian disk system} $\{D_i\}$ for $V$ is
a collection of disjoint essential disks with $\partial D_i\subset\partial_+V$ such that
cutting $V$ along $\bigcup D_i$ results in $\partial_-V\times I$.
A {\em vertical annulus} $A$ in $V$ is a properly embedded essential annulus
with one boundary component in $\partial_-V$ and the other boundary component in $\partial_+V$.
By a standard argument, there exists a complete meridian disk system for $V$ which is disjoint from $A$.
Hence there exists a dual description $V=(\partial_-V\times[0,1])$ $\cup$ $1$-handles
in which $A$ inherits the product structure.

Take a vertical arc $\gamma$ in $A$.
A tubular neighborhood $N(\gamma)$ can be regarded as a $1$-handle
connecting $W$ to $\partial_-V\times [0,\frac{1}{2}]$.
Compression bodies $V'$ and $W'$ of a {\em boundary stabilization} $V'\cup_{S'} W'$ of
$V\cup_S W$ are defined as follows.

\begin{itemize}
\item $W'=W$ $\cup$ $N(\gamma)$ $\cup$ $(\partial_-V\times[0,\frac{1}{2}])$
\item $V'=\mathrm{cl}(M-W')$
\end{itemize}

Clearly $W'$ is a compression body.
We can see that $V'$ is homeomorphic to (punctured $\partial_-V)\times [\frac{1}{2},1]$ $\cup$ $1$-handles.
So $V'\cup_{S'} W'$ becomes a Heegaard splitting and $g(S')$ is equal to $g(S)+g(\partial_-V)$.

Boundary stabilization gives a way to obtain a higher genus Heegaard splitting
from a given Heegaard splitting of a $3$-manifold with boundary.
There are many cases that the boundary stabilized surfaces can be destabilized,
although it is not clear whether it can always be destabilized.
See (\cite{Moriah-Sedgwick}, Section $5$).
For example, a boundary stabilization of a splitting admitting
a pair of a vertical annulus and an essential disk intersecting in one point,
which is called {\em $\gamma$-primitive} in \cite{Moriah-Sedgwick},
results in a stabilized Heegaard splitting.
It is easy to see that $\gamma$-primitive Heegaard splitting satisfies
the condition of Theorem \ref{thm2}.

\section{Partition of disk complex}

In this section, we give a proof of Theorem \ref{thm2}.
Let $M=V\cup_S W$ be a strongly irreducible Heegaard splitting of a $3$-manifold with boundary
admitting a vertical annulus $A\subset V$ and an essential disk $D\subset W$ with $A\cap D=\emptyset$.
Take a vertical arc $\gamma\subset A$ and boundary stabilize $V\cup_S W$
to get $V'\cup_{S'} W'$ as in Section $2$. See Figure \ref{fig1}.
We will show that $S'$ is a critical Heegaard surface of $M$.

\begin{figure}[tbh]
\includegraphics[width=10cm]{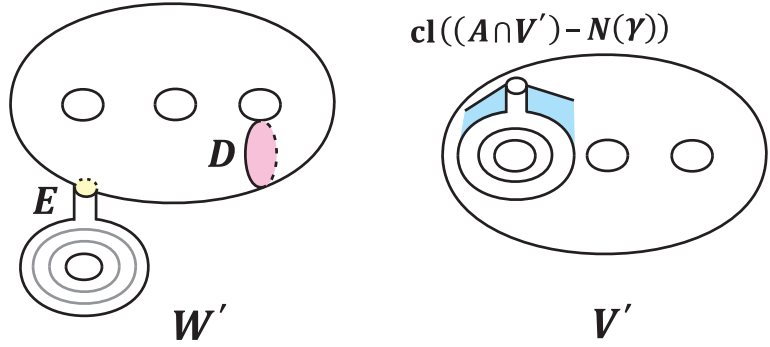}
\caption{Boundary stabilization}\label{fig1}
\end{figure}

Let $E$ be a meridian disk of $W'$ corresponding to the $1$-handle $N(\gamma)$.
Define a partition of vertices of the disk complex
$\mathcal{D}_{S'}=C_0$ $\dot{\cup}$ $C_1$ as follows.

\begin{enumerate}
\item Let $\mathcal{D}_{S'}(V')\cap C_0$ be essential disks in $V'$ that are disjoint from $E$.
\item Let $\mathcal{D}_{S'}(W')\cap C_0$ be a singleton $\{E\}$.
\item Let $\mathcal{D}_{S'}(V')\cap C_1$ be $\mathcal{D}_{S'}(V')-(\mathcal{D}_{S'}(V')\cap C_0)$.
\item Let $\mathcal{D}_{S'}(W')\cap C_1$ be $\mathcal{D}_{S'}(W')-\{E\}$.
\end{enumerate}

Since $g(\partial_-V)<g(\partial_+V)$, there are essential disks in $V$ disjoint from $\gamma$.
Hence there are essential disks in $V'$ that are disjoint from $E$.
This means that $C_0$ contains disjoint compressing disks for $S'$ on opposite sides.

Since $\gamma$ is contained in the annulus $A$, $\mathrm{cl}((A\cap V')-N(\gamma))$ is
an essential disk in $V'$ intersecting $E$ (minimally) in two points,
so it belongs to $\mathcal{D}_{S'}(V')\cap C_1$.
The essential disk $D$ persists as an essential disk in $W'$ and
belongs to $\mathcal{D}_{S'}(W')\cap C_1$.
By assumption, $\mathrm{cl}((A\cap V')-N(\gamma))$ and $D$ are disjoint.
So $C_1$ also contains disjoint compressing disks for $S'$ on opposite sides.
Hence the first condition of Definition \ref{def1} is satisfied.

By definition, any disk in $\mathcal{D}_{S'}(V')\cap C_1$ intersects $E$.
Now it remains to show that any disk in $\mathcal{D}_{S'}(V')\cap C_0$ intersects
any disk in $\mathcal{D}_{S'}(W')\cap C_1$.

Let $D_0\in\mathcal{D}_{S'}(V')\cap C_0$ be an essential disk in $V'$ that is disjoint from $E$.
Note that $V'$ is homeomorphic to (punctured $\partial_-V)\times [\frac{1}{2},1]$ $\cup$ $1$-handles,
and $\partial E$ corresponds to the puncture.
So we can see that $\partial D_0$ should be contained in $\partial_+V\cap S'$.
This means that $D_0$ can be regarded as an essential disk in $V$.
Let $E_1\in\mathcal{D}_{S'}(W')\cap C_1$ be an essential disk in $W'$ which is not isotopic to $E$.

\begin{claim}
$D_0\cap E_1\ne\emptyset$.
\end{claim}

\begin{proof}
Suppose to the contrary that $D_0\cap E_1=\emptyset$.
We assume that $E_1$ is chosen so that the number of components of intersection $|E\cap E_1|$
is minimal up to isotopy of $E_1$, satisfying $D_0\cap E_1=\emptyset$.
Since a circle component of intersection can be removed by a standard innermost disk argument,
we can assume that there is no circle component of intersection in $E\cap E_1$
Hence $E\cap E_1$ consists of arc components.
Let $\alpha$ be an outermost arc component in $E_1$ and
$\Delta$ be the corresponding outermost disk in $E_1$.
Let $\beta=cl(\partial\Delta-\alpha)$.

The disk $E$ separates $W'$ into $W$ and a manifold homeomorphic to $\partial_-V\times[0,\frac{1}{2}]$.
Suppose $\Delta$ is contained in $\partial_-V\times[0,\frac{1}{2}]$.
Surger $E$ along $\Delta$, and let $E'$ and $E''$ be the two resulting disks.
Then $E'$ is an inessential disk in $W'$ and $E''$ is isotopic to $E$, or vice versa.
In any case we can reduce $|E\cap E_1|$, a contradiction.

So $\Delta$ is contained in $W$.
Let $E'$ be one of the disk obtained by surgery of $E$ along $\Delta$.
By minimality of $|E\cap E_1|$ again, $E'$ is neither inessential in $W'$ nor isotopic to $E$.
Hence $E'$ can be regarded as an essential disk in $W$.
Since $V\cup_S W$ is a strongly irreducible Heegaard splitting, $D_0\cap E'\ne\emptyset$.
We can observe that
$$D_0\cap E'=\partial D_0\cap \partial E'=\partial D_0\cap\beta\subset \partial D_0\cap\partial E_1$$
However, $D_0\cap E_1=\emptyset$ by assumption, and this is a contradiction.
\end{proof}

Hence the partition of the disk complex $\mathcal{D}_{S'}=C_0$ $\dot{\cup}$ $C_1$ satisfies
the criticality. This completes the proof of Theorem \ref{thm2}.

\section{Amalgamation}

In this section we give a definition of amalgamation, which was first introduced in \cite{Schultens}.
Boundary stabilization discussed in Section $2$ and $3$ is an amalgamation
with a standard type $2$ Heegaard splitting of $({\rm surface})\times I$.
Let $V_1\cup_{S_1} W_1$ and $V_2\cup_{S_2} W_2$ be Heegaard splittings of 
$3$-manifolds $M_1$ and $M_2$ respectively.
Suppose $M_1$ and $M_2$ are glued together along some homeomorphic boundary components
$F_1\subset\partial_-V_1$ and $F_2\subset\partial_-V_2$.
Let $M=M_1\cup_{F} M_2$ be the resulting manifold and $F$ be the image of $F_1$ and $F_2$ in $M$.

\begin{figure}[tbh]
\includegraphics[width=12cm]{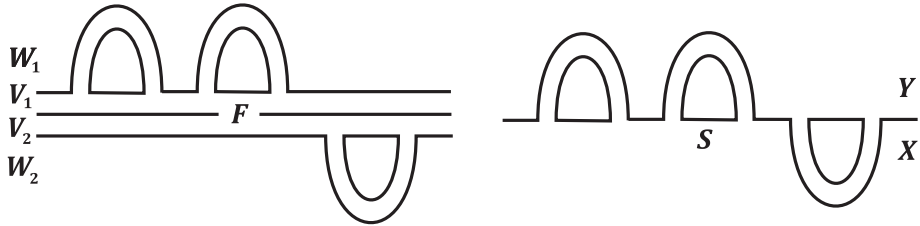}
\caption{Amalgamation}\label{fig2}
\end{figure}

By dual description of compression body, $V_1$ can be regarded as 
$(\partial_-V_1\times [0,1])$ $\cup$ $1$-handles and 
$V_2$ can be regarded as $(\partial_-V_2\times [0,1])$ $\cup$ $1$-handles.
Now collapse $(F_1\cup F_2)\times [0,1]$ to $F$ and 
regard the $1$-handles of $V_1$ and $V_2$ are attached to $F$. 
We assume that the disks where the $1$-handles are attached are mutually disjoint.
Let $X=W_2$ $\cup$ ($1$-handles in $V_1$) and $Y=W_1$ $\cup$ ($1$-handles in $V_2$). 
See Figure \ref{fig2}.
It is obvious that $X$ and $Y$ are compression bodies (or handlebodies).
The Heegaard splitting $M=X\cup_S Y$ is called an {\em amalgamation} of
$V_1\cup_{S_1} W_1$ and $V_2\cup_{S_2} W_2$.
By construction, $g(S)=g(S_1)+g(S_2)-g(F)$.
We can see that essential disks in $W_1$ and $W_2$ persist into 
essential disks in $Y$ and $X$ respectively.

\section{Proof of Theorem \ref{thm1}}
In this section, we give a proof of Theorem \ref{thm1}.
The underlying idea is same with the proof of Theorem \ref{thm2}.

Note that $V_2$ is constructed from $\partial_-V_2\times I$ by attaching only one $1$-handle.
Let $E$ be a meridian disk of $Y$ corresponding to the $1$-handle.
Define a partition of vertices of the disk complex 
$\mathcal{D}_{S}=C_0$ $\dot{\cup}$ $C_1$ as follows.

\begin{enumerate}
\item Let $\mathcal{D}_{S}(X)\cap C_0$ be essential disks in $X$ that are disjoint from $E$.
\item Let $\mathcal{D}_{S}(Y)\cap C_0$ be a singleton $\{E\}$.
\item Let $\mathcal{D}_{S}(X)\cap C_1$ be $\mathcal{D}_{S}(X)-(\mathcal{D}_{S}(X)\cap C_0)$.
\item Let $\mathcal{D}_{S}(Y)\cap C_1$ be $\mathcal{D}_{S}(Y)-\{E\}$.
\end{enumerate}

The co-core disks of $1$-handles of $V_1$ persist into essential disks in $X$ disjoint from $E$.
Hence $C_0$ contains disjoint compressing disks for $S$ on opposite sides.

The two disks $D_1$ and $D_2$ persist into disjoint essential disks in $Y$ and $X$ respectively.
Since $V_2\cup_{S_2} W_2$ is strongly irreducible, $D_2$ intersects $E$.
Hence after amalgamation, $D_1$ and $D_2$ represent elements in
$\mathcal{D}_{S}(Y)\cap C_1$ and $\mathcal{D}_{S}(X)\cap C_1$ respectively.
So $C_1$ also contains disjoint compressing disks for $S$ on opposite sides.
The first condition of Definition \ref{def1} is now satisfied.

By definition of the partition, any disk in $\mathcal{D}_{S}(X)\cap C_1$ intersects $E$.
Now it remains to show that any disk in $\mathcal{D}_{S}(X)\cap C_0$ intersects
any disk in $\mathcal{D}_{S}(Y)\cap C_1$.

\begin{figure}[tbh]
\includegraphics[width=8cm]{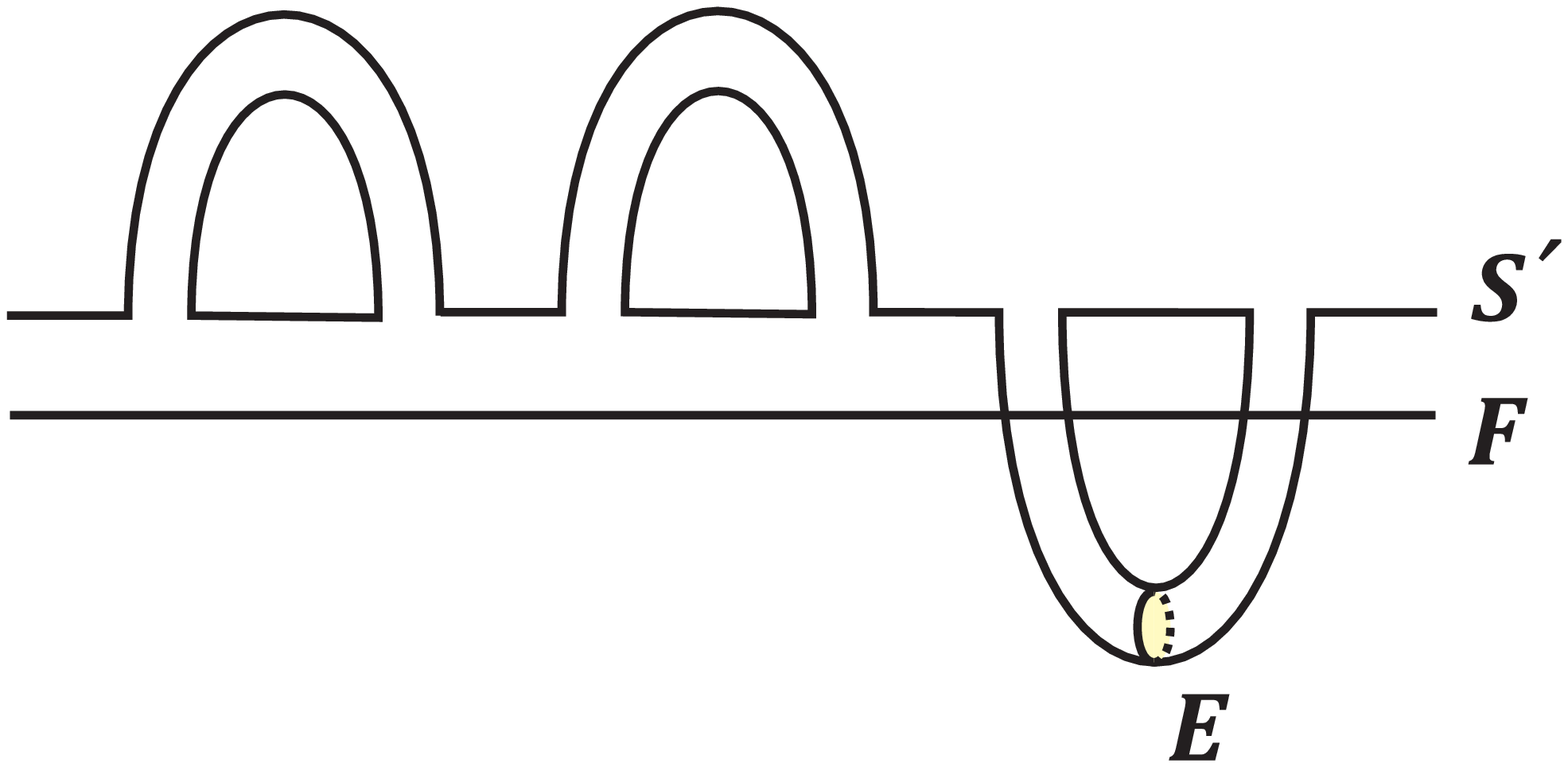}
\caption{}\label{fig3}
\end{figure}

Let $D_0\in\mathcal{D}_{S}(X)\cap C_0$ be an essential disk in $X$ that is disjoint from $E$.
The loop $\partial D_0$ is contained in a surface $S'$ homeomorphic to twice punctured $S_1$.
Take a copy of the surface $F$ so that the region between $S'$ and $F$ is
homeomorphic to $V_1$ as in the Figure \ref{fig3}.
Since both $V_1\cup_{S_1}W_1$ and $V_2\cup_{S_2} W_2$ are strongly irreducible, 
$F$ is incompressible in $M$.
Then by standard argument, we can assume that $D_0\cap F=\emptyset$.
Hence $D_0$ can be regarded as an essential disk in $V_1$.
Let $E_1\in\mathcal{D}_{S}(Y)\cap C_1$ be an essential disk in $Y$ which is not isotopic to $E$.

\begin{claim}
$D_0\cap E_1\ne\emptyset$.
\end{claim}

\begin{proof}
Suppose to the contrary that $D_0\cap E_1=\emptyset$.
We assume that $E_1$ is chosen so that the number of components of intersection $|E\cap E_1|$ is 
minimal up to isotopy of $E_1$, keeping that $D_0\cap E_1=\emptyset$.
Since a circle component of intersection can be removed by a standard innermost disk argument,
we can assume that there is no circle component of intersection in $E\cap E_1$.
Hence $E\cap E_1$ consists of arc components.
Let $\alpha$ be an outermost arc component in $E_1$ and $\Delta$ be 
the corresponding outermost disk in $E_1$. Let $\beta=cl(\partial\Delta-\alpha)$.

The disk $E$ cuts $Y$ into a manifold homeomorphic to $W_1$.
Surger $E$ along $\Delta$, and let $E'$ be one of the two resulting disks.
By minimality of $|E\cap E_1|$, $E'$ is neither inessential in $W_1$ nor isotopic to $E$.
Hence $E'$ can be regarded as an essential disk in $W_1$.
Since $V_1\cup_{S_1} W_1$ is a strongly irreducible Heegaard splitting, $D_0\cap E'\ne\emptyset$.
We can observe that
$$D_0\cap E'=\partial D_0\cap \partial E'=\partial D_0\cap\beta\subset \partial D_0\cap\partial E_1$$
However, $D_0\cap E_1=\emptyset$ by assumption, and this is a contradiction.
\end{proof}

Hence the partition of the disk complex $\mathcal{D}_{S}=C_0$ $\dot{\cup}$ $C_1$ 
satisfies the criticality. This completes the proof of Theorem \ref{thm1}.

\end{document}